\newtheorem{thm}{Theorem}[section]
\newtheorem{prop}[thm]{Proposition}
\newtheorem{lemma}[thm]{Lemma}
\newtheorem{cor}[thm]{Corollary}
\theoremstyle{definition}
\newtheorem{remark}[thm]{Remark}
\newtheorem{ex}[thm]{Example}
\numberwithin{equation}{section}
\title{Leibniz seminorms in probability spaces}
\author{\'Ad\'am Besenyei and Zolt\'an L\'eka}
\address{Department of Applied Analysis, E\"otv\"os Lor\'and University,
H-1117 Budapest, P\'azm\'any P. s\'et\'any 1/C, Hungary}
\email{badam@cs.elte.hu}
\address{Alfr\'ed R\'enyi Institute of Mathematics \\ 1053 Budapest \\ Re\'altanoda u. 13-15}
\email{leka.zoltan@renyi.mta.hu}
\thanks{This study was partially supported by the Hungarian NSRF (OTKA) grant
no. K104206 }
\subjclass[2000]{Primary 46L53, 60E15 ; Secondary 26A51, 60B99.}
\keywords{standard deviation, Leibniz seminorm, central moments, $C^*$-algebra}
\date{}
\begin{document}
 
 \begin{abstract}
   In this paper we study the (strong) Leibniz property of centered moments of bounded random variables. 
   We shall answer a question raised by M. Rieffel on the non-commutative standard deviation.  
 \end{abstract}

 \maketitle
 
\section{Introduction}


  We say that a seminorm $L$ on a unital normed algebra
$(\mathcal{A}, \|\cdot \|)$ is strongly Leibniz if (i) $L(1_\mathcal{A}) = 0,$ (ii) the Leibniz property
 $$ L(ab) \leq \|a\|L(b) + \|b\| L(a) $$ holds for every $a, b \in \mathcal{A}$ and, furthermore, 
 (iii) for every invertible $a,$ 
  $$ L(a^{-1}) \leq \|a^{-1}\|^2 L(a)$$ follows. Primary sources of strongly Leibniz seminorms are normed
  first-order differential calculi, see \cite{R2}. It is said that the 
  couple $(\Omega, \delta)$ is a normed first-order differential calculus over $\mathcal{A}$ if $\Omega$
  is a normed bimodule over $\mathcal{A}$ and $\delta$ is a derivation from $\mathcal{A}$ to $\Omega.$
  Now let us assume that $\Omega$ is acting boundedly over $\mathcal{A};$ that is, the inequalities
  $$\|a\omega\| \leq \|\omega\|_\Omega \|a\| \quad \mbox{ and } \quad  \|\omega a\| \leq \|\omega\|_\Omega \|a\| $$
  hold for every $\omega \in \Omega$ and for every $a \in \mathcal{A}.$ From the derivation rule
  $$ \delta(ab) = \delta(a)b+a\delta(b),$$ the Leibniz property of the seminorm $L(a)=\|\delta(a)\|_\Omega$ simply follows.
  Furthermore, we clearly have that 
   $$ \delta(a^{-1}) = -a^{-1}\delta(a)a^{-1},$$ whenever $a$ is invertible, hence (iii) follows as well. For instance, if we choose a (real or complex) Banach space $X$ and  $\mathcal{B}(X)$ denotes
 the normed algebra of its bounded linear operators, practically, we can easily get a first-order differential calculus.
 Actually, with the choice of $\Omega = \mathcal{B}(X),$ which acts naturally over $\mathcal{B}(X)$ via
 the left and right multiplications, the commutator $\delta(A) = [D,A] = DA - AD$ for some fixed $D \in \mathcal{B}(X)$  defines the required calculus.
 
   Consider a unital $C^*$-algebra $\mathcal{A}$ and denote $\mathcal{B}$ a $C^*$-subalgebra of $\mathcal{A}$
   with a common unit. Rieffel pointed out in 
   \cite[Theorem]{R1} that the factor norm  $\inf_{b \in \mathcal{B}} \|a-b\|$ obeys the strong Leibniz property, 
   since it equals to a commutator norm. 
   To get connection with the standard deviation, notice that K. Audenaert provided sharp estimate for different types of non-commutative
   (or quantum) deviations determined by matrices \cite{A}. 
   Not long ago Rieffel extended these results to $C^*$-algebras with a completely different approach \cite{R2}. His theorem reads as follows: for any $a \in \mathcal{A},$ 
    $$ \max_{\omega \in \mathcal{S}(\mathcal{A})} \omega(|a-\omega(a)|^2)^{1/2} = \min_{\lambda \in \mathbb{C}} \|a - \lambda {\bf 1}_\mathcal{A}\|,$$ where 
    $\mathcal{S}(\mathcal{A})$ denotes the state space of $\mathcal{A};$ i.e. the set of positive linear functionals of $\mathcal{A}$ with norm $1.$
    For a short proof of this theorem, exploiting the Birkhoff--James orthogonality in operator algebras, the reader might see \cite{BG}.
    The factor norm on the left-hand side above indicates that 'the largest standard deviation' is a strongly Leibniz seminorm. Surprisingly, the standard deviation itself
    is a strongly Leibniz seminorm. Precisely, whenever $\sigma_2^\omega(a) = \omega(|a-\omega(a)|^2)^{1/2},$ the seminorm $\sigma_2^\omega$ on $\mathcal{A}$ is strongly Leibniz if $\omega$ is 
    tracial \cite[Proposition 3.4]{R2}. Moreover, if one defines the non-commutative standard deviation by the formula
    $$ \tilde{\sigma}_2^\omega(a) = \omega(|a-\omega(a)|^2)^{1/2} \vee  \omega(|a^*-\omega(a^*)|^2)^{1/2} ,$$
    then $\tilde{\sigma}_2^\omega$ is strongly Leibniz for any $\omega \in \mathcal{S}(\mathcal{A}),$ see \cite[Theorem 3.5]{R2} (without assuming that $\omega$ is tracial). 
   Quite recently, the equality
    $$\max_{\omega \in \mathcal{S}(\mathcal{A})} \omega(|a-\omega(a)|^k)^{1/k} = 2B_k^{1/k}\min_{\lambda \in \mathbb{C}} \|a - \lambda {\bf 1}_\mathcal{A}\|$$
 was proved in \cite{L} for the $k$th central moments of normal elements, 
 where $k$ is even and $B_k$ denotes the largest $k$th centered moment of the Bernoulli distribution. 
 From this result it follows that 'the largest $k$th moments' in commutative $C^*$-algebras are strongly Leibniz as well.
 
  The aim of the paper is to study whether general or higher-ordered centered moments possess the (strong) Leibniz property in ordinary
  probability spaces, or not. In the next section we shall give
  a rough estimate of the centered moments of products of bounded random variables which gives back Rieffel's statement 
  on the standard deviation. 
  After that we shall present
  some scattered Leibniz-type result for different moments on different (discrete, general) probability spaces. We leave open the
  question whether all centered 
  moments in general probability spaces define a strongly Leibniz seminorm. 
  Lastly, in Section 3, we shall answer affirmatively Rieffel's question on the standard deviation in non-commutative probability spaces.

\section{Leibniz seminorms in function spaces}

  In this section we shall study the Leibniz property and similar estimates in ordinary probability spaces. Let $(\Omega, \mathcal{F}, \mu)$ 
  be a probability space. For any $f \colon \Omega \rightarrow \mathbb{C} \in L^\infty(\Omega,  \mu)$ and $1 \leq p < \infty,$ let us define 
  $$ \sigma_p(f; \mu) = \left(\int_\Omega \left|f - \int_\Omega f \: d\mu\right|^p \: d\mu\right)^{1/p}$$
  and
   $$ \sigma_\infty(f; \mu) = \mbox{ess sup } \left|f - \int_\Omega f \: d\mu\right|. $$
  If no confusion can arise, we simply use the notation $\sigma_p(f).$ Relying on \cite{R2}, we know that the standard deviation
  is a strongly Leibniz seminorm; that is, the inequalities 
    $$ \sigma_2(fg) \leq  \|g\|_\infty \sigma_2(f) + \|f\|_\infty \sigma_2(g) $$  for  $f,g \in L^\infty(\Omega,  \mu),$
  and
   $$\sigma_2(1/f) \leq  \|1/f\|_\infty^2 \sigma_2(f) $$
   whenever $1/f \in L^\infty(\Omega,  \mu)$  hold. For the non-commutative analogues of the result, see \cite{R2}.
  
  We begin with an observation which shows
  that one can reduce the problem of the strongly Leibniz property to that of the discrete uniform distributions.  
   
   \begin{prop}\label{ekvi}
     Fix $1 \leq p < \infty.$ The following statements are equivalent:
     \begin{itemize}
      \item[(i)]  For any probability space $(\Omega, \mathcal{F}, \mu)$, $\sigma_p$ is a strongly Leibniz seminorm on $L^\infty(\Omega, \mu).$
      \item[(ii)] For every $n \in \mathbb{Z}_+,$ $\sigma_p$ is a strongly Leibniz seminorm on $\ell^\infty_n$ endowed with the uniform distribution. 
     \end{itemize}
   \end{prop}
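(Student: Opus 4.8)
The implication (i)$\Rightarrow$(ii) is immediate, since $\ell^\infty_n$ with the uniform distribution is itself a probability space. For the converse the plan is a three-step reduction: first pass from an arbitrary $L^\infty(\Omega,\mu)$ to simple functions, then from simple functions to the algebras $\ell^\infty_n$ carrying an \emph{arbitrary} probability vector of weights, and finally from arbitrary weights to rational ones, where hypothesis (ii) can be invoked through an atom-splitting device.

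First I would record that $\sigma_p$ is $\|\cdot\|_\infty$-Lipschitz: the map $f\mapsto f-\int_\Omega f\,d\mu$ is linear, so $\sigma_p$ is a seminorm and $|\sigma_p(f)-\sigma_p(h)|\le\sigma_p(f-h)\le 2\|f-h\|_\infty$. Given $f,g\in L^\infty(\Omega,\mu)$, choose simple functions $f_k\to f$ and $g_k\to g$ uniformly; then $f_kg_k\to fg$ uniformly while $\|f_k\|_\infty\to\|f\|_\infty$ and $\|g_k\|_\infty\to\|g\|_\infty$, so the Leibniz inequality for the pairs $(f_k,g_k)$ passes to the limit. For property (iii), if $1/f\in L^\infty(\Omega,\mu)$, that is, $|f|\ge c>0$ a.e., then $|f_k|\ge c/2$ a.e. for $k$ large, hence $1/f_k$ is simple, $1/f_k\to 1/f$ uniformly, $\|1/f_k\|_\infty\to\|1/f\|_\infty$, and $\sigma_p(1/f)\le\|1/f\|_\infty^2\sigma_p(f)$ again follows in the limit. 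Thus it suffices to prove (i)--(iii) for simple functions.

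Next, a pair of simple functions, after refining the underlying finite partitions and discarding the null atoms, becomes a pair of elements $a,b$ of some $\ell^\infty_n$ equipped with a probability vector $\mu=(\mu_1,\dots,\mu_n)$ having all $\mu_i>0$, and one checks directly that $\sigma_p(f;\mu)=\sigma_p(a;\mu)$, $\|f\|_\infty=\|a\|_\infty$, with $fg$, $1/f$ corresponding to $(a_ib_i)_i$, $(a_i^{-1})_i$. So the matter reduces to showing that $\sigma_p(\,\cdot\,;\mu)$ is strongly Leibniz on $\ell^\infty_n$ for every $n$ and every such $\mu$. Now fix the coordinate values: each quantity $\sigma_p(a;\mu)^p=\sum_i\mu_i\bigl|a_i-\sum_j\mu_ja_j\bigr|^p$, and likewise those built from $b$, $(a_ib_i)_i$, $(a_i^{-1})_i$, is a continuous function of $\mu$ on the relatively open simplex $\{\mu:\mu_i>0\ \forall i\}$ (here finiteness of $p$ is used), while on that simplex $\|a\|_\infty$ and $\|a^{-1}\|_\infty$ do not depend on $\mu$ at all. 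Hence each of the three required inequalities is an inequality between continuous functions of $\mu$, and it is enough to verify it for rational probability vectors $\mu$, which are dense in the simplex.

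Finally, let $\mu_i=k_i/N$ with $k_i\in\mathbb{Z}_+$ and $\sum_ik_i=N$, and let $\Phi\colon\ell^\infty_n\to\ell^\infty_N$ be the map that repeats the $i$th coordinate $k_i$ times. Then $\Phi$ is a unital $*$-homomorphism, isometric for $\|\cdot\|_\infty$, with $\Phi(ab)=\Phi(a)\Phi(b)$, $\Phi(a^{-1})=\Phi(a)^{-1}$ and $\|\Phi(a)^{-1}\|_\infty=\|a^{-1}\|_\infty$, and it intertwines the expectation functionals, $\int\Phi(a)\,d(\mathrm{unif})=\int a\,d\mu$ for all $a$, so that $\sigma_p(\Phi(a);\mathrm{unif})=\sigma_p(a;\mu)$. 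Applying the strong-Leibniz inequalities of (ii) on $\ell^\infty_N$ to $\Phi(f),\Phi(g)$ and reading them back through $\Phi$ yields the three inequalities for $f,g$ on $\ell^\infty_n$ with weights $\mu$, which completes the argument. I expect no essential difficulty in carrying this out; the points that demand the most care are the choice of simple approximants in the first step keeping $1/f_k$ bounded and uniformly convergent, and the remark in the third step that discarding null atoms places one on the open simplex, where the norm terms are constant in $\mu$ and the continuity argument genuinely applies.
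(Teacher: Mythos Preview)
Your proposal is correct and follows essentially the same route as the paper: reduce to simple functions, pass to $\ell^\infty_n$ with an arbitrary probability vector, approximate the weights by rationals via continuity of the relevant quantities in $\mu$, and then use the coordinate-repetition embedding into $\ell^\infty_N$ with the uniform distribution. Your write-up is in fact somewhat more explicit than the paper's---particularly in handling the invertible case and in isolating the open-simplex continuity argument---but the underlying argument is the same.
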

    \begin{proof}
       Obviously, (i) implies (ii).  To see the reverse implication, choose pairwise disjoint sets $S_k \in  \mathcal{F}$ $(1\leq k \leq n).$  As usual
       $\chi_{S_k}$  denotes the characteristic function of the set $S_k.$  Let us consider the measurable simple functions $f_n = \sum_{k=1}^n a_k\chi_{S_k}$ 
       and $g_n = \sum_{k=1}^n b_k\chi_{S_k}$ on $\Omega.$ Let us assume 
       that $\bigcup_{k=1}^n S_k = \Omega,$  so that the constants $\mu(S_k)$ define a probability measure $\mu_n$ on the set $\mathbb{Z}_n = \{1, \hdots, n\}.$ Then for any $\varepsilon > 0$
       we can readily find a probability measure $\nu_n =  (p_1, \hdots, p_n)$ such that $p_i \in \mathbb{Q}$ $(1 \leq i \leq n)$ and the inequalities         
        \begin{align*}
         |\sigma_p(f_n; \mu_n) &- \sigma_p(f_n; \nu_n)| \leq \varepsilon \\    
          |\sigma_p(g_n; \mu_n) &- \sigma_p(g_n; \nu_n)| \leq \varepsilon \\
            |\sigma_p(f_ng_n; \mu_n) &- \sigma_p(f_n g_n; \nu_n)| \leq \varepsilon 
        \end{align*} 
       hold. Now let us choose the integers $m$ and $r_i$ such that $p_i = r_i / m$ for every $1 \leq i \leq n.$ Then the map
         $$\Phi \colon (c_1, \hdots, c_n) \mapsto (\underbrace{c_1, \hdots, c_1}_{r_1}, \hdots, \underbrace{c_n, \hdots, c_n}_{r_n}) $$ 
         defines an isometric algebra homomorphism from $\ell^\infty_n$ into $\ell^\infty_m.$
       Let $\lambda_m$ denote the uniform distribution on the set $\mathbb{Z}_m.$  We clearly have, for instance, $\sigma_p(f_n; \nu_n) = \sigma_p(\Phi(f_n); \lambda_m),$ hence
       $$ \sigma_p(f_n g_n; \nu_n) \leq \|f_n\|_\infty  \sigma_p(g_n; \nu_n) + \|g_n\|_\infty  \sigma_p(f_n; \nu_n)$$ follows as well. Since
       $\varepsilon$ can be arbitrary small, we obtain that $\sigma_p$ is a Leibniz seminorm on $\ell^\infty_n(\mu_n).$
       Now if we choose sequences $\{f_n\}_{n=1}^\infty$ and $\{g_n\}_{n=1}^\infty$
       of measurable simple functions such that $f_n \rightarrow f$ and $g_n 
       \rightarrow g$ in $L^p$ norm, furthermore, $\|f_n\|_\infty = \|f\|_\infty$ and $\|g_n\|_\infty = \|g\|_\infty$ hold for every $n,$
       we infer that $\sigma_p$ has the Leibniz property. 
        A very similar reasoning on the invertible elements gives that $\sigma_p$ is actually strongly Leibniz on $L^\infty(\Omega, \mu).$
    \end{proof}

   Despite of the above equivalence, in arbitrary measure spaces we do not know whether $\sigma_p$ is strongly Leibniz or not. But later we will prove this
   property for $\sigma_\infty$ in the real Banach space $L^\infty(\Omega, \mu; \mathbb{R})$  (see Theorem \ref{main} below). Actually, the second part of
   the section deals with only real-valued functions. In the 
   general situation, we have only a rough Leibniz-type estimate as we shall see below. 
   
   In any $L^p(\Omega, \mu)$ $(1 \leq p \leq \infty)$ space, the projection $P$ is given by the map
     $$ f \mapsto \mathbb{E}f = \int_\Omega f \: d\mu.$$
    
   Then we are able to prove a slight generalization of Rieffel's statement \cite[Proposition 3.4]{R2} in probability spaces.
 
 \begin{prop}\label{bad}
  For any $1 \leq p\leq \infty$ and $f,g \in L^\infty(\Omega,  \mu),$
  we have that 
 $$ \frac{2}{\|I-P\|_p + 1}  \|fg - \mathbb{E}(fg)\|_p \leq  \|g\|_\infty \|f-\mathbb{E}f\|_p + \|f\|_\infty \|g-\mathbb{E}g\|_p.$$
   \end{prop}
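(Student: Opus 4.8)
The plan is to exploit the identity $fg - \mathbb{E}(fg) = (f - \mathbb{E}f)(g - \mathbb{E}g) + (\mathbb{E}f)(g-\mathbb{E}g) + (\mathbb{E}g)(f - \mathbb{E}f) - (\mathbb{E}f)(\mathbb{E}g) + \mathbb{E}f\,\mathbb{E}g - \mathbb{E}\bigl((f-\mathbb{E}f)(g-\mathbb{E}g)\bigr)$; after collecting terms this reads $fg - \mathbb{E}(fg) = (I-P)\bigl[(f-\mathbb{E}f)(g-\mathbb{E}g)\bigr] + (\mathbb{E}f)(g-\mathbb{E}g) + (\mathbb{E}g)(f-\mathbb{E}f)$. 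Write $\tilde f = f - \mathbb{E}f = (I-P)f$ and $\tilde g = (I-P)g$. First I would apply the $L^p$ norm to this decomposition and the triangle inequality to obtain
$$ \|fg - \mathbb{E}(fg)\|_p \le \|I-P\|_p\,\|\tilde f\,\tilde g\|_p + |\mathbb{E}f|\,\|\tilde g\|_p + |\mathbb{E}g|\,\|\tilde f\|_p. $$
The middle term on the right-hand side of the desired inequality will come from estimating $\|\tilde f\,\tilde g\|_p$ by $\|\tilde f\|_p\|\tilde g\|_\infty$ (or the symmetric choice), which is where the $\|\cdot\|_\infty$ bounds enter.

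Next I would handle the coefficients $|\mathbb{E}f|$ and $|\mathbb{E}g|$. The key observation is that $|\mathbb{E}f| = \|Pf\|_\infty \le \|f\|_\infty$ trivially, but more importantly $|\mathbb{E}f| \le \|f - \mathbb{E}f\|_\infty + |\mathbb{E}f| $ is useless; instead the right move is to bound $|\mathbb{E}g| \le \|g\|_\infty$ and symmetrically, and to distribute the factor $\|I-P\|_p\,\|\tilde f\,\tilde g\|_p$ symmetrically between the two terms $\|g\|_\infty\|\tilde f\|_p$ and $\|f\|_\infty\|\tilde g\|_p$. Concretely, bounding $\|\tilde f\,\tilde g\|_p \le \tfrac12\bigl(\|\tilde f\|_p\|\tilde g\|_\infty + \|\tilde g\|_p\|\tilde f\|_\infty\bigr)$ and using $\|\tilde f\|_\infty = \|(I-P)f\|_\infty \le \|f\|_\infty + |\mathbb{E}f| \le 2\|f\|_\infty$ would give the wrong constant; so instead I would split into the two natural cases $\|\tilde f\,\tilde g\|_p \le \|\tilde f\|_p\,\|g\|_\infty$ (using $\|\tilde g\|_\infty \le \|g\|_\infty$, which holds since $\tilde g$ is $g$ minus its average — here one needs $\|g - \mathbb{E}g\|_\infty \le \|g\|_\infty$, valid because $\mathbb{E}g$ lies in the closed convex hull, equivalently the closed disk of radius $\|g\|_\infty$, hence $|g - \mathbb{E}g| \le \|g\|_\infty + \|g\|_\infty$; this only gives factor $2$, so the clean estimate $\|g - \mathbb{E}g\|_\infty \le \|g\|_\infty$ is in fact \emph{false} in general).

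Given that subtlety, the correct route is: do not pass through $\|\tilde g\|_\infty$ at all. Instead write $\|\tilde f\,\tilde g\|_p \le \min\{\|\tilde f\|_p\|\tilde g\|_\infty,\ \|\tilde g\|_p\|\tilde f\|_\infty\}$ only when it helps, and otherwise absorb the cross term differently; the cleanest is to note $\|\tilde f\,\tilde g\|_p \le \|\tilde f\|_p\,\|\tilde g\|_\infty$ and then bound $\|\tilde g\|_\infty = \|g - \mathbb{E}g\|_\infty$. Since $\mathbb{E}g$ is an average of values of $g$, we do have $|g(x) - \mathbb{E}g| \le \operatorname{diam}(\operatorname{range} g) \le 2\|g\|_\infty$, but more usefully $\|g - \mathbb{E}g\|_\infty \le 2\|g - \lambda\|_\infty$ for the optimal $\lambda$; I would therefore instead absorb the $\|I-P\|_p\|\tilde f\tilde g\|_p$ term by the symmetric split
$$ \|I-P\|_p\,\|\tilde f\,\tilde g\|_p \le \frac{\|I-P\|_p}{2}\Bigl(\|g\|_\infty\|\tilde f\|_p + \|f\|_\infty\|\tilde g\|_p\Bigr) + (\text{correction}), $$
and combine with $|\mathbb{E}f|\le\|f\|_\infty$, $|\mathbb{E}g|\le\|g\|_\infty$ to reach
$$ \|fg - \mathbb{E}(fg)\|_p \le \Bigl(\tfrac{\|I-P\|_p}{2}+1\Bigr)\bigl(\|g\|_\infty\|\tilde f\|_p + \|f\|_\infty\|\tilde g\|_p\bigr), $$
which rearranges to the claimed bound after noting that both $f$ and $g$ may be assumed \emph{real-valued and nonnegative}, or after a reduction (via Proposition~\ref{ekvi}) to the discrete uniform case where centering and the estimate $\|\tilde g\|_\infty \le \|g\|_\infty$ can be arranged by translating $g$ so that $\mathbb{E}g = 0$, which leaves $\|g\|_\infty$ replaced by $\min_\lambda\|g-\lambda\|_\infty$ — and one then uses that $\sigma_p$ is translation-invariant while $\|g\|_\infty \ge \min_\lambda \|g-\lambda\|_\infty$ goes the wrong way, so in fact one simply keeps $\|g\|_\infty$ throughout.

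The main obstacle, as the discussion above signals, is controlling the sup-norm of the centered function $g - \mathbb{E}g$ by $\|g\|_\infty$ with the right constant: the naive bound loses a factor of $2$, and recovering the sharp constant $\tfrac{2}{\|I-P\|_p+1}$ requires being careful to estimate $\|\tilde f\,\tilde g\|_p$ using the sup-norm of the \emph{uncentered} factor ($\|\tilde f\tilde g\|_p \le \|\tilde f\|_p\|g\|_\infty$, since multiplying $\tilde f$ by $g - \mathbb{E}g$ versus by $g$ differs by a constant times $\tilde f$ and the projection $I-P$ annihilates constants — indeed $(I-P)(\tilde f\,\tilde g) = (I-P)(\tilde f\,g)$, so $\|(I-P)(\tilde f\tilde g)\|_p = \|(I-P)(\tilde f g)\|_p \le \|I-P\|_p\|\tilde f g\|_p \le \|I-P\|_p\|\tilde f\|_p\|g\|_\infty$). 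That last identity, $(I-P)(\tilde f\,\tilde g) = (I-P)(\tilde f\,g) = (I-P)(f\,\tilde g)$, is the crucial trick: applied symmetrically it yields $\|(I-P)(\tilde f\tilde g)\|_p \le \tfrac{\|I-P\|_p}{2}(\|\tilde f\|_p\|g\|_\infty + \|\tilde g\|_p\|f\|_\infty)$, and feeding this into the decomposition of $fg - \mathbb{E}(fg)$ together with $|\mathbb{E}f|\le\|f\|_\infty$, $|\mathbb{E}g|\le\|g\|_\infty$ gives exactly the stated inequality. I expect the write-up to be short once this identity is isolated.
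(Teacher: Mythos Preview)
Your plan hinges on the ``crucial trick'' $(I-P)(\tilde f\,\tilde g) = (I-P)(\tilde f\,g)$, and this identity is false. You justify it by saying that $\tilde f\,\tilde g$ and $\tilde f\,g$ differ by ``a constant times $\tilde f$'' and that $I-P$ annihilates constants. But $I-P$ kills constant \emph{functions}, not scalar multiples of $\tilde f$: indeed $(I-P)\bigl((\mathbb{E}g)\tilde f\bigr)=(\mathbb{E}g)(I-P)\tilde f=(\mathbb{E}g)\tilde f$, which is nonzero in general. The correct relation is
\[
(I-P)(\tilde f\,g)=(I-P)(\tilde f\,\tilde g)+(\mathbb{E}g)\tilde f .
\]
A quick sanity check on $\Omega=\{1,2\}$ with uniform measure and $f=g=(1,0)$ already shows the two sides of your identity differ. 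Moreover, even if your identity held, the arithmetic you propose does not give the stated constant: plugging the averaged bound $\|(I-P)(\tilde f\tilde g)\|_p\le \tfrac{\|I-P\|_p}{2}(\|g\|_\infty\|\tilde f\|_p+\|f\|_\infty\|\tilde g\|_p)$ into your three-term decomposition and using $|\mathbb{E}f|\le\|f\|_\infty$, $|\mathbb{E}g|\le\|g\|_\infty$ yields the prefactor $\tfrac{2}{\|I-P\|_p+2}$, not $\tfrac{2}{\|I-P\|_p+1}$.

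The fix is exactly to absorb the extra term: substituting the correct relation above into your decomposition $fg-\mathbb{E}(fg)=(I-P)(\tilde f\tilde g)+(\mathbb{E}f)\tilde g+(\mathbb{E}g)\tilde f$ collapses it to the \emph{two}-term identity
\[
fg-\mathbb{E}(fg)=(I-P)(\tilde f\,g)+(\mathbb{E}f)\tilde g .
\]
This (with $f$ and $g$ interchanged) is precisely what the paper uses. From here one gets directly
\[
\|fg-\mathbb{E}(fg)\|_p\le \|I-P\|_p\,\|g\|_\infty\|\tilde f\|_p+\|f\|_\infty\|\tilde g\|_p,
\]
and adding this to its $f\leftrightarrow g$ twin gives the claimed constant $\tfrac{2}{\|I-P\|_p+1}$. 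So your three-term decomposition was one algebraic step away from working; the error was in how you tried to remove the unwanted $\|\tilde g\|_\infty$, not in the overall strategy.
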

 
\begin{proof} First, note that $\|I-P\|_p \geq 1$ (except for the trivial case $I=P$). Hence, without loss of generality, we can assume that
  $$ \|fg - \mathbb{E}(fg)\|_p \geq \max( \|f\|_\infty \|g-\mathbb{E}g\|_p, \|g\|_\infty \|f-\mathbb{E}f\|_p),$$
  otherwise the proof is done. Obviously, 
   $$ \|f(g-\mathbb{E}g) - \mathbb{E}(f(g-\mathbb{E}g))\|_p \leq \|I - P\|_p \|f(g-\mathbb{E}g)\|_p.$$
  From the reversed triangle inequality we obtain that 
   $$ \|fg-\mathbb{E}(fg)\|_p - \|\mathbb{E}f\mathbb{E}g- f\mathbb{E}g\|_p   \leq \|f(g-\mathbb{E}g) - \mathbb{E}(f(g-\mathbb{E}g))\|_p, $$
   which implies that 
   $$ \|fg - \mathbb{E}(fg)\|_p \leq \|I - P\|_p \|f\|_\infty \|g-\mathbb{E}g\|_p + \|g\|_\infty \|f-\mathbb{E}f\|_p.$$
  Changing the variables $f,$ $g$ and summing up the inequalities, we get the statement of the proposition.
 \end{proof}
 
 \begin{remark}
 One can find a non-trivial upper estimate of the constant $\|I-P\|_p.$ For instance,
 if $\Omega= \{ 1, \hdots, n \}$ and $\mu$ is the 
 uniform distribution on $\Omega$, from the definition of the matrix $p$-norms
 one can easily see that $\|I-P\|_1=\|I-P\|_{\infty}=2-\frac2n$ and $\|I-P\|_2=1$. 
 As another example, let us consider the Banach spaces $L^p[0,1]$ endowed
 with the Lebesgue measure. Then a simple calculation shows that $\|I-P\|_1 = \|I-P\|_\infty = 2.$ Moreover,
 $I-P$ is clearly an orthogonal
 projection in $L^2[0,1];$ that is, $\|I-P\|_2 = 1.$ Now a straightforward application of the Riesz--Thorin
 interpolation theorem gives that (see \cite{R})
   $$\|I-P\|_p \leq 2^{|1 - \frac{1}{2p}|}.$$ The projection $I-P$ is actually the minimal projection to the hyperlane 
   $X_p = \{ f \in  L^p[0,1]  : \mathbb{E}f = 0\};$
   i.e. it has the minimal norm among the projections of range $X_p.$ 
  C. Franchetti showed in his paper \cite{F} that 
  $$ \|I-P\|_p = \max_{0\leq x \leq 1} (x^{p-1} + (1-x)^{p-1})^{1/p}(x^{q-1} + (1-x)^{q-1})^{1/q},$$ where $1/p+1/q =1.$
 \end{remark}
  
  \begin{remark} One can apply a derivation approach mentioned in the Introduction to obtain Leibniz-type estimates of the moments
  of invertible functions. To do this, let us renorm the space $L^p(\Omega, \mu),$ $2 \leq p < \infty,$ so that
   $$ \|x\|_{p, \vee} :=  |\mathbb{E}x| + \|x - \mathbb{E}x\|_p.$$ Let $X$ denote the renormed space.
  Define the multiplication operator $M_f \colon x \mapsto fx$ and the derivation $\delta(M_f) = [P, M_f] =  PM_f - M_fP.$
  A straightforward calculation
  yields that 
   $$\| M_f x \|_{p, \vee} \leq \|f\|_\infty \|x\|_p  + \|I-P\|_p \|f\|_\infty \|x\|_p \leq (1+ \|I-P\|_p) \|f\|_\infty \|x\|_{p, \vee}; $$
   that is, $\| M_f\| \leq (1+\|I-P\|_p) \|f\|_\infty.$ Moreover, 
   $\delta(M_f) \mathbb{E}x = \mathbb{E}x(\mathbb{E}f - f) \in (I-P)X,$ thus $\|\delta(M_f)_{|PX}\| = \sigma_p(f).$ 
   On the other hand,  $\delta(M_f) (x - \mathbb{E}x)= \mathbb{E}(fx) - \mathbb{E}f \mathbb{E}x = \mathbb{E}((f-\mathbb{E}f)(x - \mathbb{E}x)).$
   From Hölder's inequality we get that 
   $$ \|\delta(M_f) (x - \mathbb{E}x)\|_{p, \vee} \leq \|f-\mathbb{E}f\|_q \|x-\mathbb{E}x\|_p \qquad (1/q + 1/p = 1)$$
   hence $\|\delta(M_f)_{|(I-P)X}\| \leq \sigma_p(f)$ follows. Since the operator $\delta(M_f)$ interchanges the subspaces $PX$ and $(I-P)X,$ we have
   $$ \|\delta(M_f)\| = \sigma_p(f).$$
   An application of the derivation rules tells us that 
    $$ \sigma_p(1/f) \leq (1+\|I-P\|_p)^2\|1/f\|_\infty^2 \sigma_p(f) $$ holds whenever $1/f \in L^\infty(\Omega, \mu).$
    
   For any $1 \leq p \leq \infty,$ we can get a different estimate from the equality
     $$ (I-P)M_{1/f}(I-P)f = (1/f - \mathbb{E}(1/f))\mathbb{E}f.$$
    Hence we conclude that
     $$ {|\mathbb{E}f|} \sigma_p(1/f) \leq \|I-P\|_p\|1/f\|_\infty \sigma_p(f).$$
   \end{remark}
     
   Much of the rest of the section is devoted to a study of the optimality of the above proposition.
   We begin with the following observation.
 
   \begin{prop}\label{help}
   Let $(\Omega, \mathcal{F}, \mu)$ be a probability space. For any real-valued $f$ and $x \in L^\infty(\Omega,\mu),$ the inequality   
    \begin{equation*}
		\|f\mathbb{E}x - \mathbb{E}(fx)\|_\infty \leq \|x\|_\infty \|f-\mathbb{E}f\|_\infty
		\end{equation*}
		holds. 
  \end{prop}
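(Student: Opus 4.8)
The plan is to prove the estimate pointwise and thereby reduce it to a one–variable convexity statement. Fix a representative of $f$. For a.e.\ $\omega \in \Omega$, writing $t := f(\omega)$ we have
$$ f(\omega)\,\mathbb{E}x - \mathbb{E}(fx) = \mathbb{E}\bigl((t-f)x\bigr), $$
and since $f$ is real valued this gives
$$ \bigl| f(\omega)\,\mathbb{E}x - \mathbb{E}(fx) \bigr| \le \int_\Omega |t-f|\,|x|\, d\mu \le \|x\|_\infty\, g(t), \qquad \text{where } g(t) := \int_\Omega |t-f|\, d\mu. $$
So it is enough to show $g(t) \le \|f - \mathbb{E}f\|_\infty$ for every $t$ between $a := \operatorname{ess\,inf} f$ and $b := \operatorname{ess\,sup} f$, because a.e.\ value $f(\omega)$ lies in $[a,b]$.

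Next I would use that $g \colon \mathbb{R} \to \mathbb{R}$ is well defined (as $f \in L^\infty$) and convex, being an average of the convex maps $t \mapsto |t - s|$. A convex function on $[a,b]$ attains its maximum at an endpoint, so $g(t) \le \max\bigl(g(a), g(b)\bigr)$ for all $t \in [a,b]$. Since $a \le f \le b$ almost everywhere, the absolute values collapse and
$$ g(a) = \int_\Omega (f - a)\, d\mu = \mathbb{E}f - a, \qquad g(b) = \int_\Omega (b - f)\, d\mu = b - \mathbb{E}f. $$

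Finally, $\|f - \mathbb{E}f\|_\infty = \operatorname{ess\,sup} |f - \mathbb{E}f| = \max(\mathbb{E}f - a,\, b - \mathbb{E}f)$, which is exactly $\max(g(a), g(b))$; combining this with the pointwise estimate above yields the claim. I do not anticipate a genuine obstacle: the only care needed is in the almost-everywhere bookkeeping — that $f$ takes values in $[a,b]$ a.e., so that the pointwise bound upgrades to the essential-supremum bound — and in checking that the two endpoint values of $g$ reassemble into $\|f - \mathbb{E}f\|_\infty$.
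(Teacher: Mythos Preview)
Your argument is correct and is genuinely different from---and more elementary than---the paper's. The paper fixes $x$, normalises so that $\mathbb{E}f=0$ and $\|f\|_\infty=1$, and then views the left-hand side as a convex function of $f$ on the weak-$*$ compact convex set $\{f:\|f\|_\infty\le 1,\ \mathbb{E}f=0\}$; it invokes the Krein--Milman theorem, identifies the extreme points as functions with essential range $\{-1,1,c\}$, and verifies the inequality by hand on those. You instead freeze $\omega$ and reduce to the scalar convex function $g(t)=\|t-f\|_1$ on the interval $[\operatorname{ess\,inf}f,\ \operatorname{ess\,sup}f]$, whose endpoint values are exactly $\mathbb{E}f-a$ and $b-\mathbb{E}f$. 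This sidesteps the functional-analytic machinery entirely: no extreme-point description, no Krein--Milman, just one-variable convexity plus the observation that $\|f-\mathbb{E}f\|_\infty=\max(\mathbb{E}f-a,\ b-\mathbb{E}f)$. The paper's route has the virtue of making the extremal configurations explicit (which connects with the later discrete analysis for small $n$), but for the bare inequality your approach is shorter and more robust.
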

  
  \begin{proof}
    Without loss of generality, we can assume that $\mathbb{E}f = 0$ holds and $\|f\|_\infty = 1$.
    Note that the function $f \mapsto f\mathbb{E}x - \mathbb{E}(fx)$ is convex on the weak-$*$ compact, convex  set 
     $$ L^\infty_0(\Omega) := \{ f \in L^\infty(\Omega,  \mu) \colon \|f\|_\infty \leq 1 \mbox{ and } \mathbb{E}f = 0\} \subseteq (L^1(\Omega,\mu))^*. $$ 
  Hence, from the Krein--Milman theorem, it is enough to prove the statement if $f$ is an extreme point of $L^\infty_0(\Omega).$
  We claim that the extreme points of $L^\infty_0(\Omega)$ are the functions with essential range $\{-1,1,c \}$ for some $-1< c<1,$
   ($\mu(\{f=c\}) = 0$ might be possible) and 
   \begin{equation}\label{eq1}
     \mathbb{E}f = \mu(\{f=1\})-\mu(\{f=-1\})+c\mu(\{f=c\})=0.    
   \end{equation} 
   Let us choose a measurable subset $A$ of $\Omega$
   such that $\|f\chi_A\|_\infty \leq 1-\varepsilon < 1.$ If $\mu$ is non-atomic ($A$ is not a singleton), we can find a function $g \in L^\infty_0(\Omega)$ 
   satisfying $\|g\|_\infty \leq \varepsilon$ and $g = 0$ a.e. on $\Omega \setminus A$. Since
    $$ f = \frac12 (f+g)+ \frac12 (f-g),$$
    $f$ is an extreme point if and only if $\mu(A) = 0$. When $\mu$ is atomic, the set $A$ might be a singleton, hence our claim follows. 
   
  Now let $f$ be an extreme point of $L^\infty_0(\Omega).$
   Obviously, $\|f-\mathbb{E}f\|_\infty = 1.$  Furthermore, we have
   \begin{align*}
      \|f\mathbb{E}x - \mathbb{E}(fx)\|_\infty &= \max(|\mathbb{E}x - \mathbb{E}(fx)|,|\mathbb{E}x + \mathbb{E}(fx)|,|c\mathbb{E}x-\mathbb{E}(fx)|) \\
                      &=  \max(|\mathbb{E}(x(1 - f))|,|\mathbb{E}(x(1 + f))|,|\mathbb{E}(x(c-f))|)\\
                        &\leq \|x\|_\infty \max(\|1 - f\|_1,\|1 + f\|_1,\|c-f\|_1).
     \end{align*}
   It remains to show that $\max(\|1 - f\|_1,\|1 + f\|_1,\|c-f\|_1)=1$. Clearly, from \eqref{eq1}
	\begin{align*}
	\|1 - f\|_1&=2\mu(\{f=-1\})+|1-c|\mu(\{f=c\})\\&=1-\mu(\{f=1\})+\mu(\{f=-1\})-c\mu(\{f=c\})=1.
	\end{align*}
	Similarly,
	\begin{align*}
	\|1 + f\|_1&=2\mu(\{f=1\})+|1+c|\mu(\{f=c\})\\&=1+\mu(\{f=1\})-\mu(\{f=-1\})+c\mu(\{f=c\})=1,
	\end{align*}
	and lastly we infer that 
   \begin{align*}
	\|c - f\|_1&=|c-1|\mu(\{f=1\})+|c+1|\mu(\{f=-1\})\\&=\mu(\{f=1\})+\mu(\{f=-1\})+c^2\mu(\{f=c\})\leq 1.
	\end{align*}
    The proof is complete.
  \end{proof}

      For the real Banach space $L^\infty(\Omega,\mu; \mathbb{R}),$ we can simply prove that the seminorm $\sigma_\infty$ is strongly Leibniz 
      as we have seen before for the standard deviation.
   
	   \begin{thm}\label{main}
     Let $(\Omega, \mathcal{F}, \mu)$ be a probability space. For the real Banach space $L^\infty(\Omega,\mu; \mathbb{R}),$
      $$ \sigma_\infty(f)= \|f - \mathbb{E}f\|_\infty$$ is a strongly Leibniz seminorm. 
            \end{thm}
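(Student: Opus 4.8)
The plan is to derive all three defining properties of a strongly Leibniz seminorm directly from Proposition~\ref{help}, which already carries the analytic weight. Property (i) is immediate, since $\sigma_\infty(1) = \|1 - \mathbb{E}1\|_\infty = 0$, so only the Leibniz inequality and the inverse inequality require an argument.

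For the Leibniz property I would separate the ``multiplication part'' from the ``expectation part'' by writing
$$ fg - \mathbb{E}(fg) = f(g - \mathbb{E}g) + \bigl(f\,\mathbb{E}g - \mathbb{E}(fg)\bigr). $$
The first summand is controlled trivially, $\|f(g - \mathbb{E}g)\|_\infty \leq \|f\|_\infty \|g - \mathbb{E}g\|_\infty = \|f\|_\infty \sigma_\infty(g)$, while the second is precisely of the shape handled by Proposition~\ref{help} applied with $x = g$, giving $\|f\,\mathbb{E}g - \mathbb{E}(fg)\|_\infty \leq \|g\|_\infty \|f - \mathbb{E}f\|_\infty = \|g\|_\infty \sigma_\infty(f)$. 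The triangle inequality then yields $\sigma_\infty(fg) \leq \|f\|_\infty \sigma_\infty(g) + \|g\|_\infty \sigma_\infty(f)$.

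For the inverse inequality, assume $1/f \in L^\infty(\Omega, \mu; \mathbb{R})$ and apply Proposition~\ref{help} once more, this time with $x = 1/f$; since $\mathbb{E}(f \cdot \tfrac1f) = 1$, this reads $\bigl\| f\,\mathbb{E}(1/f) - 1 \bigr\|_\infty \leq \|1/f\|_\infty\, \sigma_\infty(f)$. Rewriting the left-hand side as $\bigl\| f\bigl(\tfrac1f - \mathbb{E}(1/f)\bigr) \bigr\|_\infty$ and then using the pointwise identity $\tfrac1f - \mathbb{E}(1/f) = \tfrac1f \cdot f\bigl(\tfrac1f - \mathbb{E}(1/f)\bigr)$, I get
$$ \sigma_\infty(1/f) = \Bigl\| \tfrac1f - \mathbb{E}(1/f) \Bigr\|_\infty \leq \|1/f\|_\infty \Bigl\| f\bigl(\tfrac1f - \mathbb{E}(1/f)\bigr) \Bigr\|_\infty \leq \|1/f\|_\infty^2\, \sigma_\infty(f), $$
which is exactly property (iii).

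As for the main obstacle: once Proposition~\ref{help} is available the argument is essentially bookkeeping, so the genuine difficulty has already been absorbed there (the Krein--Milman reduction to the extreme points of $L^\infty_0(\Omega)$ and the verification that the relevant $L^1$-distances equal $1$). The single point worth flagging is that Proposition~\ref{help} is stated for real-valued $f$, which is why the theorem is restricted to the real Banach space $L^\infty(\Omega, \mu; \mathbb{R})$; carrying out the same splitting over $\mathbb{C}$ would require a Leibniz-type bound on $\|f\,\mathbb{E}x - \mathbb{E}(fx)\|_\infty$ lacking the sign structure exploited above, and that is the question left open earlier in the section.
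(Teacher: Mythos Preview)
Your proof is correct and follows exactly the paper's route: the same decomposition $fg-\mathbb{E}(fg)=f(g-\mathbb{E}g)+(f\,\mathbb{E}g-\mathbb{E}(fg))$ for the Leibniz inequality, and the same factorization $\tfrac1f-\mathbb{E}(1/f)=\tfrac1f\bigl(\mathbb{E}(f\cdot\tfrac1f)-f\,\mathbb{E}(1/f)\bigr)$ together with Proposition~\ref{help} (applied with $x=1/f$) for the inverse estimate. Your closing remark about the restriction to real scalars is also on target.
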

  \begin{proof}
    From Proposition \ref{help}, it follows that 
\begin{align*}
\|fg-\mathbb{E}(fg)\|_p&=\|f(g-\mathbb{E}g)+(f\mathbb{E}g-\mathbb{E}(fg))\|_p\\&\leq\|f\|_{\infty}\|g-\mathbb{E}g\|_p+\|g\|_{\infty}\|f-\mathbb{E}f\|_p,
\end{align*}
and 
\[\left\|\frac1f-\mathbb{E}\frac1f\right\|_p=\left\|\frac1f\left(\mathbb{E}\left(f\cdot\frac1f\right)-f\mathbb{E}\frac1f\right)\right\|_p
\leq\left\|\frac1f\right\|_\infty\cdot\left\|\frac1f\right\|_\infty\cdot \|f-\mathbb{E}f\|_p,\]
which is what we intended to have. 
\end{proof}

 Regarding the case of the uniform distributions seen above in Proposition \ref{ekvi}, we are able to prove the analogue of Proposition \ref{help}
 in very particular cases. Let $\lambda_n$ stand for the uniform distribution on $\mathbb{Z}_n.$

 \begin{prop}\label{help2}
   Fix $1 \leq n \leq 4.$ For $1 \leq p < \infty,$ and any real-valued $f, x \in \ell^\infty_n(\lambda_n),$ we have   
    \begin{equation*}\label{seged} 
		\|f\mathbb{E}x - \mathbb{E}(fx)\|_p \leq \|x\|_\infty \|f-\mathbb{E}f\|_p.
		\end{equation*}
  \end{prop}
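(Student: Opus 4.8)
The plan is to pass from the stated inequality, in the spirit of the proof of Proposition~\ref{help}, to a short list of elementary numerical estimates by two successive convexity reductions, and then to check those estimates by hand when $n\le 4$. Both sides are homogeneous of degree one in $x$, so it suffices to prove that $\|f\mathbb{E}x-\mathbb{E}(fx)\|_p\le\|f-\mathbb{E}f\|_p$ whenever $\|x\|_\infty\le 1$, i.e. whenever $x$ lies in the closed unit ball of $\ell^\infty_n$. The map $x\mapsto f\mathbb{E}x-\mathbb{E}(fx)$ is linear, hence $x\mapsto\|f\mathbb{E}x-\mathbb{E}(fx)\|_p$ is convex on that ball and attains its maximum at an extreme point, that is, at some sign vector $x\in\{-1,1\}^n$. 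From now on $x\in\{-1,1\}^n$. Write $u_i=f_i-\mathbb{E}f$, so that $\sum_iu_i=0$; for such an $x$, with $k=\#\{j:x_j=1\}$, one has $\sum_jx_j=2k-n$ and $\sum_ju_jx_j=2\sum_{j:x_j=1}u_j$, and a short computation turns the inequality to be proved into
\[
\sum_{i=1}^n\Bigl|(2k-n)u_i-2\!\!\sum_{j:\,x_j=1}\!\!u_j\Bigr|^p\le n^p\sum_{i=1}^n|u_i|^p .
\]

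For the second reduction, relabel the coordinates so that $u_1\ge u_2\ge\cdots\ge u_n$. The quantity $S=\sum_{j:x_j=1}u_j$ is a sum of $k$ of the numbers $u_1,\dots,u_n$, hence $S$ lies in the interval with endpoints $u_{n-k+1}+\cdots+u_n$ and $u_1+\cdots+u_k$, while $t\mapsto\sum_i|(2k-n)u_i-2t|^p$ is convex; therefore it is enough to treat these two endpoint values. Since $\sum_iu_i=0$, the endpoint $u_1+\cdots+u_k$ produces exactly the value of the left-hand side for the ordered sign vector $x^{(k)}$ whose first $k$ coordinates equal $1$ and the rest equal $-1$, and the endpoint $u_{n-k+1}+\cdots+u_n$ produces the value for $x^{(n-k)}$. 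Consequently it suffices to establish the displayed inequality for every ordered vector $x^{(k)}$, $k=0,1,\dots,n$. The cases $k=0$ and $k=n$ are equalities, and the reflection $u_i\mapsto-u_{n+1-i}$, which preserves the ordering and the condition $\sum_iu_i=0$, maps the $x^{(k)}$-inequality to the $x^{(n-k)}$-one; so only $1\le k\le n/2$ remains. For $n\le 4$ this leaves the case $k=1$ (for each of $n=2,3,4$) together with the case $k=2$ when $n=4$.

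For $x^{(1)}$ put $w_i=(2-n)u_i-2u_1$. Then $w_1=-nu_1$, which contributes exactly $n^p|u_1|^p$; and for $i\ge 2$, inserting $u_1=-(u_2+\cdots+u_n)$ gives $w_i=(4-n)u_i+2\sum_{2\le j\le n,\,j\ne i}u_j$, a linear combination whose coefficients $4-n$ and $2$ are nonnegative (this is the point at which $n\le 4$ is used) and sum to $n$. The triangle inequality together with convexity of $t\mapsto t^p$ ($p\ge 1$) then yields $|w_i|^p\le n^{p-1}\bigl((4-n)|u_i|^p+2\sum_{j\ne i,\,j\ge 2}|u_j|^p\bigr)$, and on summing over $i\ge 2$ the coefficient of each $|u_j|^p$ collapses to $n^{p-1}\cdot n=n^p$; adding the $w_1$-contribution gives $\sum_i|w_i|^p\le n^p\sum_i|u_i|^p$, closing this case. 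When $n=4$ and $k=2$ one has $2k-n=0$, so $w_i=-2(u_1+u_2)$ for every $i$; since $u_1+u_2=-(u_3+u_4)$ gives $2|u_1+u_2|\le\sum_i|u_i|$, one more application of convexity of $t\mapsto t^p$ over the four coordinates finishes the estimate.

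The one delicate point — and the reason the argument is confined to $n\le 4$ — is the requirement in the $k=1$ case that the coefficient $4-n$ be nonnegative: for $n\ge 5$ the term $w_i$ contains genuine cancellation that the triangle inequality overestimates, so the present method breaks down there and a substantially different idea would be required.
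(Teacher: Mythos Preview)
Your argument is correct and follows essentially the same route as the paper's proof: both reduce by convexity to sign vectors $x\in\{-1,1\}^n$, cut the remaining cases down to one representative per value of $k$ (the paper by permutation symmetry, you by convexity in $S=\sum_{x_j=1}u_j$), and then verify each case via convexity of $t\mapsto|t|^p$. Your treatment of the $k=1$ case uniformly for all $n\le 4$ via the coefficient identity $(4-n)+2(n-2)=n$ is a nice streamlining of the paper's separate computations for $n=3$ and $n=4$, and makes transparent why the method stops at $n=4$.
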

  
\begin{proof}	
	 First note that the case $\Omega=\mathbb{Z}_1$ is trivial.
	 On the other hand, in case of $\Omega=\mathbb{Z}_2$, one can have arbitrary distribution.
	 Indeed, let $\mu(1)=p_1$ and $\mu(2)=p_2=1-p_1$. Then by simple calculation we obtain
\[f-\mathbb{E}f=(f_1-f_2)\cdot(p_2,-p_1)\]
and
\[f\mathbb{E}x-\mathbb{E}(fx)=(f_1-f_2)\cdot(p_2x_2,-p_1x_1)\]
so the desired inequality follows immediately.

	To prove the remaining cases $\Omega=\mathbb{Z}_3$ and $\Omega=\mathbb{Z}_4,$ let us
	rescale the inequality and assume that $\|x\|_\infty=1.$ Notice that the function
    $$x \mapsto \|f\mathbb{E}x - \mathbb{E}(fx)\|_p$$ is convex on the closed unit ball 
    $\{x\in L^\infty(\Omega,\mu):\|x\|_\infty \leq 1\}$, therefore it suffices to check the inequality only for its extreme points.
    
     First, we turn to the case $\Omega=\mathbb{Z}_3$. Clearly, for $x=(1,1,1)$ even equality holds,
     so after possible rearrangement and multiplication by constants we may assume that $x=(1,1,-1)$. Then 
\[f-\mathbb{E}f=\frac13(2f_1-f_2-f_3,-f_1+2f_2-f_3,-f_1-f_2+2f_3)\]
and
\[f\mathbb{E}x-\mathbb{E}(fx)=\frac13(f_3-f_1,f_3-f_2,2f_3-f_1-f_2).\]
By using the notation $a_1=2f_1-f_2-f_3$ and $a_2=2f_2-f_1-f_3,$ the inequality reduces to the form
\[\left|\frac{2a_1+a_2}3\right|^p+\left|\frac{a_1+2a_2}3\right|^p\leq |a_1|^p+|a_2|^p,\]
which is obviously true from the convexity of the function $t\mapsto |t|^p.$

 Next, let $\Omega=\mathbb{Z}_4$. By symmetry arguments we can assume that $x=(1,1,1,-1)$ or $x=(1,1,-1,-1)$. Set $x=(1,1,1,-1)$. A simple calculation implies that
\[f-\mathbb{E}f=\frac14(a_1,a_2,a_3,a_4),\]
where
\[a_j=3f_j-\sum_{i\neq j}f_i.\]
Moreover,
\[f\mathbb{E}x-\mathbb{E}(fx)=\frac14\begin{pmatrix} +f_1-f_2-f_3+f_4\\-f_1+f_2-f_3+f_4\\-f_1-f_2+f_3+f_4\\-f_1-f_2-f_3+3f_4\end{pmatrix}
=\frac18\begin{pmatrix}-a_2-a_3\\-a_1-a_3 \\-a_1-a_2 \\ 2a_4\end{pmatrix}.\]
Therefore, it is enough to check that
\[\left|\frac{a_2+a_3}2\right|^p+\left|\frac{a_1+a_3}2\right|^p+\left|\frac{a_1+a_2}2\right|^p\leq|a_1|^p+|a_2|^p+|a_3|^p,\]
which follows again by the convexity of the function $t\mapsto|t|^p.$

Lastly, consider the remaining case $x=(1,1,-1,-1)$. Then
\[f\mathbb{E}x-\mathbb{E}(fx)=
\frac14\begin{pmatrix}-f_1-f_2+f_3+f_4\\-f_1-f_2+f_3+f_4\\-f_1-f_2+f_3+f_4\\-f_1-f_2+f_3+f_4\end{pmatrix}=
\frac{1}{16}\begin{pmatrix} -a_1-a_2+a_3+a_4 \\ -a_1-a_2+a_3+a_4  \\ -a_1-a_2+a_3+a_4 \\ -a_1-a_2+a_3+a_4 \end{pmatrix}.\]
Since 
\[4\left|\frac{-a_1-a_2+a_3+a_4}4\right|^p\leq|a_1|^p+|a_2|^p+|a_3|^p+|a_4|^p,\]
by a convexity argument as seen before, we get the statement of the proposition.
\end{proof}
  
\begin{ex} 
The statement of Proposition \ref{help2} does not hold in general. Let $n\geq5$ and $p = 1, $ for instance. Let $x=(1,\dots,1,-1)$ 
 and $f=(1,0,\dots,0,-1)$ in $\ell^\infty_n(\lambda_n).$
 Obviously,  $\mathbb{E}f=0$, $\mathbb{E}x=1-\frac{2}{n}$, $\mathbb{E}(fx)=\frac2n$, $\|x\|_{\infty}=1$, furthermore,
\[\|f-\mathbb{E}f\|_1=\frac{2}n,\]
and
\[\|f\mathbb{E}x-\mathbb{E}(fx)\|_1=\left\|\left(1-\frac4n,-\frac2n,\dots,-\frac2n,-1\right)\right\|_1=\frac{4n-8}{n^2} \qquad (n\geq 5).\]
Thus
\[ \|f\mathbb{E}x-\mathbb{E}(fx)\|_1 = \left(2-\frac4{n}\right) {\|f-\mathbb{E}f\|_1} > {\|f-\mathbb{E}f\|_1}.\]
\end{ex} 

\begin{ex} 
In the case of non-uniform distributions, the inequality of Proposition \ref{help2} is not true even on $\Omega=\{1,2,3\}$.
To see this, define the measure  $\mu(1)=\frac18$, $\mu(2)=\frac34$, $\mu(3)=\frac18$ and consider $f=(1,0,-1)$ and $x=(1,1,-1)$.
Then $\mathbb{E}f=0$, $\mathbb{E}x=\frac34,$ $\mathbb{E}(fx)=\frac14,$ and
\[\|f-\mathbb{E}f\|_1=\frac14,\]
while
\[\|f\mathbb{E}x-\mathbb{E}(fx)\|_1=\left\|\left(\frac12,-\frac14,-1\right)\right\|_1=\frac38.\]
\end{ex}	

As we have seen before in the proof of Theorem \ref{main}, we can infer the next statement on discrete measure spaces. 

\begin{cor}\label{kov}
 For $1 \leq n \leq 4$ and $1 \leq p < \infty,$ the seminorm $\sigma_p$ is strongly Leibniz 
 on the real $\ell_n^\infty$ endowed with uniform
 distribution.
\end{cor}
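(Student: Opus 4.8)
The plan is to run the argument of Theorem \ref{main} verbatim, with Proposition \ref{help2} taking over the role that Proposition \ref{help} played there. Since we are on $\ell_n^\infty$ with the uniform distribution $\lambda_n$ and $1\le n\le 4$, Proposition \ref{help2} applies to every real-valued pair and every exponent $1\le p<\infty$, which is exactly the input needed. The preliminary points are immediate: $\sigma_p(f)=\|(I-P)f\|_p$ is a seminorm and it vanishes on constant vectors, so only the product rule and the inverse rule require attention.

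For the product rule I would decompose
$$ fg-\mathbb{E}(fg) = f(g-\mathbb{E}g) + \bigl(f\mathbb{E}g - \mathbb{E}(fg)\bigr), $$
bound the first summand pointwise by $\|f\|_\infty\,|g-\mathbb{E}g|$, bound the second in $L^p$ by $\|g\|_\infty\,\|f-\mathbb{E}f\|_p$ using Proposition \ref{help2} applied with $x:=g$, and then add the triangle inequality in $\ell_n^p$. This yields $\sigma_p(fg)\le\|f\|_\infty\sigma_p(g)+\|g\|_\infty\sigma_p(f)$. For the inverse rule, given $f$ with all coordinates nonzero I would use the identity
$$ \frac1f-\mathbb{E}\frac1f = -\frac1f\left(f\,\mathbb{E}\frac1f-\mathbb{E}\Bigl(f\cdot\frac1f\Bigr)\right), $$
take $L^p$ norms, pull out $\|1/f\|_\infty$, and apply Proposition \ref{help2} once more, this time with $x:=1/f$, to get $\sigma_p(1/f)\le\|1/f\|_\infty^2\,\sigma_p(f)$.

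There is essentially no obstacle: all the work is concentrated in Proposition \ref{help2}, and this corollary is just the remark — already used inside the proof of Theorem \ref{main} — that the auxiliary estimate on $\|f\mathbb{E}x-\mathbb{E}(fx)\|_p$ automatically produces both halves of the strong Leibniz property. The only point to state carefully is that both invocations of Proposition \ref{help2} are legitimate because the vectors plugged in as ``$x$'' (namely $g$ and $1/f$) are real-valued, which holds since we work in the real space $\ell_n^\infty$.
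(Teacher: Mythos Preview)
Your proposal is correct and matches the paper's approach exactly: the paper states the corollary follows ``as we have seen before in the proof of Theorem \ref{main}'', i.e., by rerunning that argument with Proposition \ref{help2} replacing Proposition \ref{help}, precisely as you do. Your explicit verification that the vectors fed into Proposition \ref{help2} are real-valued is a welcome clarification but involves no new idea.
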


 Surprisingly, we cannot prove or disprove the last statement on measure spaces which contain more than $4$ atoms. Computer simulations suggest
us that Corollary \ref{kov} might be true for any $n$ which would imply that $\sigma_p$ is a 
strongly Leibniz seminorm for every $1 \leq p < \infty$ (see Proposition \ref{ekvi}). Now we have only a very few particular results on general measure spaces.  
Denote $\lambda_n$ the uniform distribution on the set $\mathbb{Z}_n,$ as usual.

   	\begin{prop} Let $1\leq p < \infty$ and  $f,$  $g\in \ell^\infty_n(\lambda_n)$ be such that the coordinates 
   	of $f,g$ and $fg$ have the same order. Then 
\[\|fg-\mathbb{E}(fg)\|_p\leq \|g\|_{\infty}\|f-\mathbb{E}f\|_p + \|f\|_{\infty}\|g-\mathbb{E}g\|_p\]
holds.
\end{prop}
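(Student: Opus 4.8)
The plan is to bound $\sigma_p(fg)$ not by comparing it with $\sigma_p(f)$ and $\sigma_p(g)$ separately (which, as Proposition~\ref{bad} already shows, costs a multiplicative constant), but with one auxiliary sequence, and then to upgrade a coordinatewise Lipschitz domination into a norm domination by a convexity‑free argument.

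First I would exploit that both $\sigma_p(\,\cdot\,;\lambda_n)$ and $\|\cdot\|_\infty$ are invariant under permutations of coordinates. Since $f$, $g$ and $fg$ share a common order, after relabelling we may assume $f_1\le\cdots\le f_n$, $g_1\le\cdots\le g_n$ and $f_1g_1\le\cdots\le f_ng_n$ simultaneously. Put $h:=fg-\mathbb{E}(fg)$ and $v:=\|f\|_\infty g+\|g\|_\infty f-\mathbb{E}\bigl(\|f\|_\infty g+\|g\|_\infty f\bigr)$; both are nondecreasing ($v$ because it is a nonnegative combination of nondecreasing sequences, up to a constant) and both have mean $0$. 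The elementary but crucial step is that for $i\le j$,
\[(fg)_j-(fg)_i=f_j(g_j-g_i)+g_i(f_j-f_i)\le\|f\|_\infty(g_j-g_i)+\|g\|_\infty(f_j-f_i)=v_j-v_i,\]
and since both ends are nonnegative this gives $|h_i-h_j|\le|v_i-v_j|$ for all $i,j$, and also that $v-h$ is nondecreasing. (Only the monotonicity of $f$ and $g$ is used here; the monotonicity of $fg$ enters through that of $h$.)

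Then I would prove the domination lemma that does the real work: if $h,v\in\ell^\infty_n$ are both nondecreasing with $\mathbb{E}h=\mathbb{E}v=0$ and $|h_i-h_j|\le|v_i-v_j|$ for all $i,j$, then $\|h\|_p\le\|v\|_p$. For this, write $v=h+(v-h)$, a sum of two mean‑zero, similarly ordered vectors, and pick $w\in\ell^\infty_n$ with $\|w\|_q\le1$ (with $q$ conjugate to $p$), $\mathbb{E}(hw)=\|h\|_p$, and $w_i$ a nondecreasing function of $h_i$ — e.g. $w_i$ proportional to $\mathrm{sgn}(h_i)|h_i|^{p-1}$, which is monotone in $h_i$, so $w$ is nondecreasing. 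Then $L^p$--$L^q$ duality gives
\[\|v\|_p\ge\mathbb{E}(vw)=\mathbb{E}(hw)+\mathbb{E}\bigl((v-h)w\bigr)=\|h\|_p+\mathrm{Cov}(v-h,w)\ge\|h\|_p,\]
the last step being Chebyshev's sum inequality applied to the two nondecreasing vectors $v-h$ and $w$ together with $\mathbb{E}(v-h)=0$. Applying the lemma to the $h$ and $v$ above and then the triangle inequality finishes the proof:
\[\sigma_p(fg)=\|h\|_p\le\|v\|_p=\bigl\|\,\|f\|_\infty(g-\mathbb{E}g)+\|g\|_\infty(f-\mathbb{E}f)\,\bigr\|_p\le\|f\|_\infty\,\sigma_p(g)+\|g\|_\infty\,\sigma_p(f).\]

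The hard part will be the domination lemma, or rather the two ideas behind it: that the correct comparison object is the single sequence $v$ rather than $f$ and $g$ one at a time, and that the passage from $|h_i-h_j|\le|v_i-v_j|$ to $\|h\|_p\le\|v\|_p$ should be made through the splitting $v=h+(v-h)$ and Chebyshev's inequality, not through an extreme‑point reduction in the style of Propositions~\ref{help} and \ref{help2}. The comonotonicity constraint on $f$, $g$, $fg$ makes the relevant convex set awkward, so this route looks the cleanest; if it failed, the fallback would be to try such an extreme‑point analysis directly.
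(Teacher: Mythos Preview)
Your argument is correct and, at its core, coincides with the paper's: both compare $h=fg-\mathbb{E}(fg)$ with the single auxiliary vector $v=\|f\|_\infty(g-\mathbb{E}g)+\|g\|_\infty(f-\mathbb{E}f)$ via the same increment decomposition $f_jg_j-f_ig_i=f_j(g_j-g_i)+g_i(f_j-f_i)$. The only genuine difference is in how the comparison is upgraded to a norm inequality. The paper observes that the increment bound is exactly the statement that $h$ is majorized by $v$ (partial sums in the common decreasing order), and then invokes the Schur-convexity of the $\ell^p$-norm from the literature. You instead give a self-contained proof of the needed special case: from $v-h$ nondecreasing and mean zero you get $\|h\|_p\le\|v\|_p$ by $L^p$--$L^q$ duality together with Chebyshev's sum inequality. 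Your route avoids any appeal to majorization theory and is slightly more elementary; the paper's route is shorter if one is willing to cite Schur-convexity, and makes transparent that what is really being proved is the majorization $h\prec v$.
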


\begin{proof}

We use the fact that the $\ell^p$ norm with uniform distribution and $1\leq p\leq\infty$ is a Schur-convex function \cite[Ch.~3 Example I.1]{MOA}.
Therefore, it suffices to prove that the vector $fg-\mathbb{E}(fg)$ is majorized by $\|f\|_\infty(g-\mathbb{E}g)+\|g\|_\infty(f-\mathbb{E}f)$. 
To see this, we may assume without loss of generality that $f_1\geq f_2\geq\dots\geq f_n,$ thus we also have $g_1\geq g_2\geq\dots\geq g_n$
and $f_1g_1\geq f_2g_2\geq\dots\geq f_ng_n$. Then we have to verify that
\[\sum_{j=1}^k (f_jg_j-\mathbb{E}(fg))\leq \sum_{j=1}^k\left(\|g\|_\infty(f_j-\mathbb{E}f)+\|f\|_\infty(g_j-\mathbb{E}g)\right),\]
for all $1\leq k\leq n-1$, and equality holds when $k=n$. The latter equality is obvious because both sides are zero if $k=n.$
In the remainder of the proof, a simple calculation gives that
\[n\left(\sum_{j=1}^k(f_j-\mathbb{E}f)\right)
=(n-k)\sum_{j=1}^kf_j-k\sum_{j=k+1}^nf_j=\sum_{j=1}^k\sum_{i=k+1}^n(f_j-f_i)\]
and analogously
\begin{align*}
n\left(\sum_{j=1}^k(f_jg_j-\mathbb{E}(fg))\right)&=\sum_{j=1}^k\sum_{i=k+1}^n(f_jg_j-f_ig_i)\\&=\sum_{j=1}^k\sum_{i=k+1}^n\left(f_j(g_j-g_i)+g_i(f_j-f_i)\right).
\end{align*}
Therefore, it follows that
\begin{align*}
&\sum_{j=1}^k\left(\|f\|_\infty(g_j-\mathbb{E}g)+\|g\|_\infty(f_j-\mathbb{E}f)-(f_jg_j-\mathbb{E}(fg))\right)\\&=\frac1n\left(\sum_{j=1}^k\sum_{i=k+1}^n(g_j-g_i)(\|f\|_\infty-f_j)+(f_j-f_i)(\|g\|_\infty-g_i)\right)\geq0.
\end{align*}
\end{proof}

Analogously to the proof of Proposition \ref{ekvi}, we readily obtain the following corollaries. 

\begin{cor}
   Let $(\Omega, \mathcal{F}, \mu)$ be a probability space and $1\leq p < \infty.$ For any non-negative $f \in L^\infty(\Omega,  \mu),$
 $$ \|f^2-\mathbb{E}f^2\|_p\leq2\|f\|_{\infty}\|f-\mathbb{E}f\|_p. $$
\end{cor}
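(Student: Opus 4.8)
The plan is to obtain the corollary as a direct specialization of the preceding proposition together with the reduction technique of Proposition \ref{ekvi}.

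First I would settle the discrete uniform case. Apply the preceding proposition with $g=f$: since $f$ is non-negative, after reordering its coordinates so that $f_1\geq f_2\geq\dots\geq f_n$ we automatically have $f_1^2\geq f_2^2\geq\dots\geq f_n^2$, so the triple $f,f,f^2$ satisfies the co-monotonicity hypothesis. Hence for every $n$ and every non-negative $f\in\ell^\infty_n(\lambda_n)$,
\[\|f^2-\mathbb{E}f^2\|_p\leq 2\|f\|_\infty\|f-\mathbb{E}f\|_p.\]

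Next I would lift this to an arbitrary probability space $(\Omega,\mathcal{F},\mu)$ by repeating verbatim the argument in the proof of Proposition \ref{ekvi}. Given non-negative $f\in L^\infty(\Omega,\mu)$, choose non-negative simple functions $f_m=\sum_{k}a_k\chi_{S_k}$ ($a_k\geq 0$, the $S_k$ forming a measurable partition of $\Omega$) with $\|f_m\|_\infty=\|f\|_\infty$ and $f_m\to f$ in $L^p$. The numbers $\mu(S_k)$ define a discrete probability measure on $\mathbb{Z}_n$; perturbing it to a rational one $\nu_n$ within $\varepsilon$ in each of the quantities $\sigma_p(f_m;\cdot)$ and $\sigma_p(f_m^2;\cdot)$, and applying the isometric algebra homomorphism $\Phi$ of Proposition \ref{ekvi} (which preserves non-negativity and intertwines $\mathbb{E}$), we transport the discrete-uniform inequality above to $f_m$ under $\nu_n$, and then letting $\varepsilon\to0$ to $f_m$ under $\mu$.

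Finally I would pass to the limit $m\to\infty$: one has $\mathbb{E}f_m\to\mathbb{E}f$ and $\mathbb{E}f_m^2\to\mathbb{E}f^2$, hence $\|f_m-\mathbb{E}f_m\|_p\to\|f-\mathbb{E}f\|_p$, while
\[\|f_m^2-f^2\|_p=\|(f_m-f)(f_m+f)\|_p\leq 2\|f\|_\infty\|f_m-f\|_p\to0\]
gives $\|f_m^2-\mathbb{E}f_m^2\|_p\to\|f^2-\mathbb{E}f^2\|_p$, and the desired inequality follows. The only thing needing care is the bookkeeping that keeps the simple approximants non-negative and norm-matched and makes squaring continuous on the bounded set in question; there is no genuine obstacle, since the whole argument is a specialization of the proof of Proposition \ref{ekvi}.
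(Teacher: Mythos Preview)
Your proposal is correct and follows exactly the route the paper indicates: apply the preceding proposition with $g=f$ (non-negativity guarantees the required co-monotonicity of $f$ and $f^2$), and then transfer the inequality from the discrete uniform case to an arbitrary probability space via the approximation argument of Proposition~\ref{ekvi}. The paper gives no further details beyond the sentence ``Analogously to the proof of Proposition~\ref{ekvi}, we readily obtain the following corollaries,'' so your write-up is simply a faithful expansion of that sketch.
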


\begin{cor}
   Let $1\leq p < \infty$ and $\mu$ be a probability measure on the interval $[0,1].$  For any non-negative, bounded and monotone increasing (or decreasing)
   functions $f$ and $g,$ we have
 $$ \|fg-\mathbb{E}fg\|_p\leq \|g\|_{\infty}\|f-\mathbb{E}f\|_p + \|f\|_{\infty}\|g-\mathbb{E}g\|_p. $$
\end{cor}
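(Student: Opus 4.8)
The plan is to obtain the corollary from the preceding proposition by an approximation scheme copied from the proof of Proposition~\ref{ekvi}, the one extra point to watch being that the comonotonicity hypothesis survives each reduction step. We may assume $f$ and $g$ are both monotone increasing; the decreasing case is identical (or follows from the reflection $t\mapsto 1-t$, which carries $\mu$ to its mirror image and merely reverses the order of the coordinates below). Since $f,g\ge 0$ are increasing, so is $fg$.

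First I would choose monotone increasing simple functions $f_n$ and $g_n$ whose common level sets are consecutive subintervals $I_1,\dots,I_N$ of $[0,1]$ (with $N$ depending on $n$), such that $f_n\to f$, $g_n\to g$ and $f_ng_n\to fg$ in $L^p(\mu)$, with $\|f_n\|_\infty=\|f\|_\infty$ and $\|g_n\|_\infty=\|g\|_\infty$; e.g.\ take the value of $f_n$ on $I_k$ to be the essential supremum of $f$ there, along a refining sequence of partitions that captures the atoms of $\mu$, exactly as in the proof of Proposition~\ref{ekvi}. Because $f$ and $g$ are non-negative and increasing, the vectors of values of $f_n$, $g_n$ and $f_ng_n$ are listed in one common (increasing) order over the intervals $I_1,\dots,I_N$. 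Hence it suffices to prove the Leibniz inequality for each pair $f_n,g_n$ with respect to $\mu$ and then pass to the limit.

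Fixing $n$, the weights $p_k=\mu(I_k)$ define a probability measure $\mu_N$ on $\mathbb{Z}_N$; as in Proposition~\ref{ekvi} I would pick a rational $\nu_N=(r_1/m,\dots,r_N/m)$ for which $\sigma_p(f_n;\cdot)$, $\sigma_p(g_n;\cdot)$ and $\sigma_p(f_ng_n;\cdot)$ change by at most $\varepsilon$ when $\mu_N$ is replaced by $\nu_N$, and let $\Phi\colon\ell^\infty_N\to\ell^\infty_m$ be the block-repetition map, an isometric algebra homomorphism with $\sigma_p(h;\nu_N)=\sigma_p(\Phi h;\lambda_m)$ for every $h$. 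The key observation is that $\Phi(f_n)$, $\Phi(g_n)$ and $\Phi(f_ng_n)=\Phi(f_n)\Phi(g_n)$ again have their coordinates in one common order, since repeating entries blockwise does not disturb a sorted list and the ordering never depended on the weights. Therefore the preceding proposition applies to $\Phi(f_n),\Phi(g_n)$ on $\ell^\infty_m(\lambda_m)$, and transporting the resulting inequality back through $\Phi$ (using $\|\Phi(f_n)\|_\infty=\|f_n\|_\infty$ and $\|\Phi(g_n)\|_\infty=\|g_n\|_\infty$) gives the Leibniz estimate for $f_n,g_n$ relative to $\nu_N$; letting $\varepsilon\to0$ and then $n\to\infty$ finishes the proof. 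The only nonmechanical step is the one just emphasized — ensuring that passing to a rational measure and then to the uniform distribution preserves comonotonicity of $f$, $g$, $fg$, so that the hypothesis of the preceding proposition stays in force; the rest is the argument of Proposition~\ref{ekvi} repeated verbatim.
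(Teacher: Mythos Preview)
Your proposal is correct and follows exactly the approach the paper intends: it reduces to the discrete uniform case via the approximation scheme of Proposition~\ref{ekvi} and then invokes the preceding proposition, the only extra care being that the common ordering of $f$, $g$, $fg$ survives the passage to simple functions and the block-repetition map $\Phi$. The paper's own proof is just the sentence ``analogously to the proof of Proposition~\ref{ekvi}'', so your write-up is precisely the argument being gestured at.
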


\section{Standard deviation in $C^*$-algebras}

 In this section we shall complete Rieffel's argument on the standard deviation in non-commutative probability spaces.
 Let $\mathcal{A}$ be a unital $C^*$-algebra and
 denote $\omega$ any faithful state of it. 
 Denote $L^2(\mathcal{A}, \omega)$ the GNS Hilbert space obtained by completing $\mathcal{A}$ for the inner product 
  $\langle a,b \rangle = \omega(b^* a),$ as usual.
 Obviously, every $a \in \mathcal{A}$ has a natural representation; i.e. the left-regular 
 representation $L_a,$ in the operator algebra of $L^2(\mathcal{A}, \omega).$
 Consider now the projection (or Dirac operator) $E \colon a \mapsto \omega(a){\bf 1}_\mathcal{A}$ on $L^2(\mathcal{A}, \omega).$ Direct calculations for 
 the norm of the commutator $\delta(L_a) = [E,L_a] = EL_a - L_aE$ give that
   $$ \|\delta(L_a)\| = \omega(|a - \omega(a)|^2)^{1/2} \vee \omega(|a^* - \omega(a^*)|^2)^{1/2}. $$
 Thus it immediately follows that Rieffel's non-commutative standard deviation is a strongly Leibniz $*$-seminorm, see \cite[Theorem 3.7]{R2}. Moreover, an application
 of the 'independent copies trick' in $C^*$-algebras gives that
  $$ \sigma^\omega_2(a) := \omega(|a - \omega(a)|^2) $$ is strongly Leibniz as well if one assumes that $\omega$ is tracial \cite[Proposition 3.6]{R2}. Actually, the 'strong' part of the statement requires only the tracial assumption.
  Computer simulations for matrices indicate that $\sigma^\omega_2$ might be strongly Leibniz for any state $\omega$ but the question remained
  open in \cite{R2}. Now we shall provide the affirmative answer by means of an elementary argument. 
  
  Pick a faithful state $\omega$ of $\mathcal{A}.$ Let  $\| a\|_2 = \omega(|a|^2)^{1/2}$ denote the norm on $L^2(\mathcal{A}, \omega).$ 
  We begin with 
  
     \begin{lemma}\label{ilem}
      For any $a$ and $x \in \mathcal{A},$ 
    \begin{equation*}
		\|\omega(x)a - \omega(xa)\|_2 \leq \|x\| \|a-\omega(a)\|_2.
		\end{equation*}
  \end{lemma}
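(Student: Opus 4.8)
The plan is to transplant the commutative argument behind Proposition~\ref{help} into the GNS Hilbert space $L^2(\mathcal{A},\omega)$, using that the inner product there is $\langle a,b\rangle=\omega(b^*a)$ and that $\|\cdot\|_2$ is induced by it. First I would reduce to the case $\omega(a)=0$: if $b=a-\omega(a){\bf 1}_{\mathcal{A}}$, then $\omega(b)=0$, a direct expansion using linearity of $\omega$ shows that $\omega(x)a-\omega(xa){\bf 1}_{\mathcal{A}}=\omega(x)b-\omega(xb){\bf 1}_{\mathcal{A}}$, and $\|b\|_2=\|a-\omega(a)\|_2$; so it suffices to prove $\|\omega(x)a-\omega(xa){\bf 1}_{\mathcal{A}}\|_2\le\|x\|\,\|a\|_2$ under the extra hypothesis $\omega(a)=0$ (hence also $\omega(a^*)=\overline{\omega(a)}=0$).

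Next I would simply expand the norm square from $\|y\|_2^2=\omega(y^*y)$. Writing $\alpha=\omega(x)$, $\beta=\omega(xa)$, the product $(\bar\alpha a^*-\bar\beta{\bf 1}_{\mathcal{A}})(\alpha a-\beta{\bf 1}_{\mathcal{A}})=|\alpha|^2a^*a-\bar\alpha\beta\,a^*-\alpha\bar\beta\,a+|\beta|^2{\bf 1}_{\mathcal{A}}$ has $\omega$-value $|\alpha|^2\omega(a^*a)+|\beta|^2$, the two cross terms vanishing because $\omega(a)=\omega(a^*)=0$. Thus
\[\|\omega(x)a-\omega(xa){\bf 1}_{\mathcal{A}}\|_2^2=|\omega(x)|^2\,\|a\|_2^2+|\omega(xa)|^2.\]

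The crux is to estimate $|\omega(xa)|$, and here the naive Cauchy--Schwarz bound $|\omega(xa)|=|\langle a,x^*\rangle|\le\|a\|_2\,\omega(xx^*)^{1/2}$ is too lossy; one must exploit $\omega(a)=0$, i.e.\ that $a$ is orthogonal to the cyclic vector ${\bf 1}_{\mathcal{A}}$, to subtract off the projection of $x^*$. Concretely, since $\omega(a)=0$ we have $\omega(xa)=\omega\bigl((x-\omega(x){\bf 1}_{\mathcal{A}})a\bigr)=\bigl\langle a,(x-\omega(x){\bf 1}_{\mathcal{A}})^*\bigr\rangle$, so Cauchy--Schwarz gives $|\omega(xa)|\le\|a\|_2\,\bigl(\omega(xx^*)-|\omega(x)|^2\bigr)^{1/2}$, using $\bigl\|(x-\omega(x){\bf 1}_{\mathcal{A}})^*\bigr\|_2^2=\omega(xx^*)-|\omega(x)|^2$. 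Substituting this into the previous display,
\[\|\omega(x)a-\omega(xa){\bf 1}_{\mathcal{A}}\|_2^2\le|\omega(x)|^2\|a\|_2^2+\bigl(\omega(xx^*)-|\omega(x)|^2\bigr)\|a\|_2^2=\omega(xx^*)\,\|a\|_2^2\le\|x\|^2\|a\|_2^2,\]
which is the claim. (Equivalently, the key step is Bessel's inequality for the orthonormal pair ${\bf 1}_{\mathcal{A}}$, $a/\|a\|_2$ tested against the vector representing $x^*$.) The one spot that needs an idea rather than bookkeeping is precisely this: feeding the orthogonality $\omega(a)=0$ into the Cauchy--Schwarz step, which is what makes the extra term $|\omega(xa)|^2$ fit under $\|x\|^2\|a\|_2^2$.
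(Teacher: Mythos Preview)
Your proof is correct and follows essentially the same route as the paper's: reduce to $\omega(a)=0$, split $\omega(x)a-\omega(xa){\bf 1}_{\mathcal{A}}$ into its orthogonal components in $L^2(\mathcal{A},\omega)$, and bound $|\omega(xa)|=|\omega((x-\omega(x))a)|$ by Cauchy--Schwarz. Your choice to work with squared norms via Pythagoras is in fact the cleaner formulation; the paper's printed chain uses the unsquared identities $\|\omega(x)a-\omega(xa)\|_2=\|\omega(x)a\|_2+|\omega(xa)|$ and $|\omega(x^*)|+\|x^*-\omega(x^*)\|_2=\|x^*\|_2$, which are only literally correct after squaring both sides.
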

  
  \begin{proof} 
   There is no loss of generality in assuming that $\omega(a) =  0.$
   Denote $E$ the orthogonal projection from $L^2(\mathcal{A}, \omega)$ onto its subspace $\mathbb{C} {\bf 1}_\mathcal{A}.$ Then
   $$ \|\omega(x)a - \omega(xa)\|_2 = \|\omega(x)(I-E)a - E\omega(xa)\|_2 = \|\omega(x)a\|_2 + |\omega(xa)|.$$ Notice that the Cauchy--Schwarz
   inequality readily gives that 
   $$ |\omega(xa)| = |\omega((x - \omega(x))a)| \leq \|a\|_2 \|x^* - \omega(x^*)\|_2.$$
   Hence 
   \begin{align*}
    \|\omega(x)a - \omega(xa)\|_2 &= \|\omega(x)a\|_2 + |\omega(xa)| \\
                                        &\leq |\omega(x^*)| \|a\|_2  +  \|a\|_2 \|x^* - \omega(x^*)\|_2 \\
                                        &= \|x^*\|_2\|a\|_2 \\
                                        &\leq \|x^*\| \|a\|_2 \\
                                        &= \|x\| \|a\|_2,
    \end{align*}
    and the proof is finished.
   \end{proof} 
         
      Now the main theorem of the section reads as follows.  
         
  \begin{thm}
      For any invertible $a \in \mathcal{A},$ the inequality   
    \begin{equation*}
		\|a^{-1} - \omega(a^{-1})\|_2 \leq \|a^{-1}\|^2 \|a-\omega(a)\|_2
		\end{equation*}
		holds.

  \end{thm}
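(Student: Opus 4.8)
The plan is to mimic the commutative argument in the proof of Theorem \ref{main}, using Lemma \ref{ilem} as the non-commutative replacement for Proposition \ref{help}. The key algebraic identity is the same: since $a^{-1}\cdot a = {\bf 1}_\mathcal{A}$, we have $\omega(a^{-1}a) = 1$, and therefore
\[
a^{-1} - \omega(a^{-1}){\bf 1}_\mathcal{A} = a^{-1}\bigl(\omega(a^{-1}a){\bf 1}_\mathcal{A} - a\,\omega(a^{-1})\bigr) = a^{-1}\bigl(\omega(a^{-1}a) - \omega(a^{-1}\cdot a^{-1}\cdot a \cdot \cdots)\bigr),
\]
so more carefully one writes $\omega(a^{-1}){\bf 1}_\mathcal{A}\cdot a^{-1}\cdot a$ and factors; the clean version is
\[
a^{-1} - \omega(a^{-1}){\bf 1}_\mathcal{A} = a^{-1}\bigl(\omega(a^{-1}a){\bf 1}_\mathcal{A} - a\,\omega(a^{-1})\bigr),
\]
and the bracket has the shape $\omega(xa) - a\,\omega(x)$ with $x = a^{-1}$, i.e. minus the quantity controlled by the lemma after a left-multiplication by $a$.

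First I would take norms in $L^2(\mathcal{A},\omega)$ and use the operator-bound $\|a^{-1}b\|_2 \le \|a^{-1}\|\,\|b\|_2$ (valid because $b\mapsto a^{-1}b$ is the left-regular representation $L_{a^{-1}}$, whose operator norm on the GNS space is at most $\|a^{-1}\|$). This reduces the claim to
\[
\|\omega(a^{-1}a){\bf 1}_\mathcal{A} - a\,\omega(a^{-1})\|_2 \le \|a^{-1}\|\,\|a-\omega(a){\bf 1}_\mathcal{A}\|_2.
\]
Now apply Lemma \ref{ilem} with $x = a^{-1}$: the left-hand side is exactly $\|\omega(x)\,a - \omega(xa)\|_2$ (up to an overall sign, which is harmless), so the lemma bounds it by $\|x\|\,\|a - \omega(a){\bf 1}_\mathcal{A}\|_2 = \|a^{-1}\|\,\|a-\omega(a){\bf 1}_\mathcal{A}\|_2$. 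Combining the two inequalities yields the factor $\|a^{-1}\|^2$ as desired.

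The only point requiring care — and the step I expect to be the mild obstacle — is the bookkeeping of which element plays the role of $a$ and which of $x$ in the lemma, since here the "variable" $a$ of the lemma is instantiated by the fixed invertible element, while the lemma's $x$ is instantiated by $a^{-1}$; one must check that $\omega(a^{-1}a)=1$ is used in the right place and that no extra hypothesis (such as $\omega(a)=0$, which the lemma's proof assumes only as a normalization) is secretly needed. Everything else is the identity above plus the contractivity of $L_{a^{-1}}$ on the GNS space, both of which are routine. Thus the strong Leibniz property of $\sigma_2^\omega$ follows for an arbitrary faithful state $\omega$, completing Rieffel's argument.
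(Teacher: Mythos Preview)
Your proposal is correct and follows exactly the paper's own proof: the same algebraic identity $a^{-1}-\omega(a^{-1})=a^{-1}\bigl(\omega(a^{-1}a)-\omega(a^{-1})a\bigr)$, the same left-multiplication bound $\|a^{-1}b\|_2\le\|a^{-1}\|\,\|b\|_2$, and the same application of Lemma~\ref{ilem} with $x=a^{-1}$. Your caution about the normalization $\omega(a)=0$ in the lemma's proof is well placed but harmless, since that assumption is only a translation used inside the proof and the lemma's statement itself covers arbitrary $a$.
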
   
 
 \begin{proof}
   We clearly have that $$\|xa\|_2 \leq \|x\| \|a\|_2$$ for any $x \in  \mathcal{A}.$ In fact,
    $$ \omega(|xa|^2) = \omega(a^*|x|^2a) \leq \omega(a^*\|x\|^2a) = \|x\|^2 \omega(|a|^2).  $$
   Combining the previous inequality with Lemma \ref{ilem}, it follows that
    \begin{align*}
  \|a^{-1} - \omega(a^{-1})\|_2 &= \|a^{-1}(\omega(a^{-1}a )- \omega(a^{-1})a))\|_2 \\
                                &= \|a^{-1}(\omega({a}^{-1}a )- \omega({a}^{-1})a))\|_2 \\
                                &\leq  \|a^{-1}\| \|\omega({a}^{-1} a)- \omega({a}^{-1})a)\|_2 \\
                                &\leq \|a^{-1}\|^2 \|a-\omega(a)\|_2,
  \end{align*}
    and the proof is complete.
      \end{proof} 
      
  With \cite[Proposition 3.4]{R2} at hand, we immediately obtain the following
  
  \begin{thm}
    Let $\mathcal{A}$ be a unital $C^*$-algebra. For any faithful state $\omega$ of $\mathcal{A},$ $\sigma^\omega_2(a)$ is a strongly Leibniz seminorm.
  \end{thm}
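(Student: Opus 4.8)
The plan is to combine the two Leibniz ingredients into the single inequality (ii) and then appeal to the already-available invertibility estimate. Recall that $\sigma_2^\omega(a) = \|a - \omega(a)\|_2$, where $\|\cdot\|_2$ is the GNS norm attached to the faithful state $\omega$. We must check the three defining properties of a strongly Leibniz seminorm. Property (i) is trivial since $\sigma_2^\omega({\bf 1}_\mathcal{A}) = \|{\bf 1}_\mathcal{A} - \omega({\bf 1}_\mathcal{A})\|_2 = 0$. Property (iii), namely $\sigma_2^\omega(a^{-1}) \leq \|a^{-1}\|^2 \sigma_2^\omega(a)$ for invertible $a$, is exactly the content of the preceding theorem, so nothing remains to be done there.

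The heart of the matter is the Leibniz property (ii). First I would write, for $a, b \in \mathcal{A}$,
\[
ab - \omega(ab) = a\bigl(b - \omega(b)\bigr) + \bigl(\omega(b)a - \omega(ab)\bigr),
\]
and apply the triangle inequality in $\|\cdot\|_2$. For the first term, the submultiplicativity estimate $\|xc\|_2 \leq \|x\|\,\|c\|_2$ established in the proof of the invertibility theorem gives $\|a(b-\omega(b))\|_2 \leq \|a\|\,\|b - \omega(b)\|_2 = \|a\|\,\sigma_2^\omega(b)$. For the second term, Lemma \ref{ilem}, applied with $x = a$ in the role of the first variable and $b$ in the role of $a$, yields $\|\omega(b)a - \omega(ab)\|_2 = \|\omega(a\cdot b) - \omega(a)\,b\|_2$ — wait, one must line up the roles carefully: Lemma \ref{ilem} states $\|\omega(x)a - \omega(xa)\|_2 \leq \|x\|\,\|a - \omega(a)\|_2$, so taking $x = a$ and the lemma's ``$a$'' to be our $b$ gives precisely $\|\omega(a)b - \omega(ab)\|_2 \leq \|a\|\,\sigma_2^\omega(b)$; to hit the term $\omega(b)a - \omega(ab)$ instead, I take $x = b$ and the lemma's ``$a$'' to be our $a$, obtaining $\|\omega(b)a - \omega(ba)\|_2 \leq \|b\|\,\sigma_2^\omega(a)$. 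Since we may have $\omega(ba) \neq \omega(ab)$ in the non-tracial case, it is cleaner to split symmetrically: write $ab - \omega(ab) = a(b - \omega(b)) + (\omega(b)a - \omega(ab))$ and bound the bracketed term as $\|\omega(b)a - \omega(ba)\|_2 + \|\omega(ba) - \omega(ab)\|_2$? No — that last difference need not vanish and is not controlled. The correct route is to keep the decomposition above and invoke Lemma \ref{ilem} directly on $\omega(b)a - \omega(ab)$: this is $-(\omega(ab) - \omega(b)a)$, which is not literally of the lemma's form. So instead I would use the other decomposition $ab - \omega(ab) = (a - \omega(a))b + (\omega(a)b - \omega(ab))$, apply $\|(a-\omega(a))b\|_2 \leq \|a - \omega(a)\|_2\,\|b\|$ (again via the submultiplicativity of $\|\cdot\|_2$ on the right, which holds because $\omega(b^*|x|^2 b) \leq \|x\|^2\omega(|b|^2)$ fails in general — one needs $\|xb\|_2 \le \|x\|\|b\|_2$, valid, or $\|bx\|_2 \le \|b\|_2\|x\|$, also valid), and then Lemma \ref{ilem} with $x = a$ gives $\|\omega(a)b - \omega(ab)\|_2 \leq \|a\|\,\sigma_2^\omega(b)$. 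This produces $\sigma_2^\omega(ab) \leq \|b\|\,\sigma_2^\omega(a) + \|a\|\,\sigma_2^\omega(b)$ directly, without any symmetrization.

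The main obstacle is exactly this bookkeeping around non-traciality: one must be careful never to exchange $\omega(ab)$ for $\omega(ba)$, and must pick the decomposition of $ab - \omega(ab)$ so that every term is hit either by the norm-submultiplicativity estimate $\|xc\|_2 \le \|x\|\|c\|_2$ or by Lemma \ref{ilem} in its stated form. Once the correct split is chosen the argument is a two-line application of the triangle inequality. Finally, since (i), (ii), (iii) have all been verified, $\sigma_2^\omega$ is a strongly Leibniz seminorm, which is the assertion of the theorem; the reference to \cite[Proposition 3.4]{R2} in the statement header can be read as supplying the $*$-seminorm refinements there, but the elementary argument just sketched already gives the strong Leibniz property outright.
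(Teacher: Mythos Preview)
Your handling of (i) and (iii) is fine and matches the paper: property (iii) is exactly the preceding theorem, and (i) is trivial. The paper's own proof of (ii) is simply to cite \cite[Proposition~3.4]{R2}, which already establishes the Leibniz inequality for $\sigma_2^\omega$ with respect to \emph{any} state; only the ``strong'' part there required traciality, and that gap is what the preceding theorem fills.

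Your attempt to \emph{re-derive} (ii) from Lemma~\ref{ilem} contains a genuine error. In the decomposition
\[
ab - \omega(ab) = (a-\omega(a))b + \bigl(\omega(a)b - \omega(ab)\bigr)
\]
the second term is indeed controlled by Lemma~\ref{ilem}, but your bound on the first term, $\|(a-\omega(a))b\|_2 \le \|a-\omega(a)\|_2\,\|b\|$, asserts that right multiplication by $b$ is bounded on $L^2(\mathcal{A},\omega)$ with norm $\le \|b\|$. This is false for non-tracial $\omega$: only the \emph{left} bound $\|xc\|_2 \le \|x\|\,\|c\|_2$ holds in general, and the paper itself remarks immediately after the theorem that the right bound $\|ya\|_2 \le \|a\|\,\|y\|_2$ needs $\omega$ tracial. (A $2\times 2$ counterexample: take $\omega(\cdot)=\mathrm{Tr}(\rho\,\cdot)$ with $\rho=\mathrm{diag}(1/4,3/4)$, $c=e_{11}$, $b=e_{12}$; then $\|cb\|_2^2=3/4>1/4=\|c\|_2^2\|b\|^2$.) Your first decomposition fails for the reason you yourself note ($\omega(ab)\ne\omega(ba)$), and the second fails for this reason; so neither route through Lemma~\ref{ilem} plus submultiplicativity closes the Leibniz inequality in the non-tracial case. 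You should invoke Rieffel's Proposition~3.4 for (ii), as the paper does.
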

  
  Alternatively, for any faithful tracial state $\omega,$ we can define a derivation on a Banach algebra to infer the above corollary. 
  In fact, let us consider the Banach space $$\mathcal{A} \oplus L^2(\mathcal{A}, \omega)$$
  endowed with the norm $\|(x,y)\| = \max(\|x\|, \|y\|_2).$ The linear operators
   $$  E \colon (x,y) \mapsto (0, \omega(x){\bf 1}_\mathcal{A})$$ and
   $$ T_a \colon (x,y) \mapsto (xa, ya)$$ on $\mathcal{A} \oplus L^2(\mathcal{A}, \omega)$ define a
   strongly Leibniz seminorm $L$ on $\mathcal{A}$ via the norm of the derivation $L(a) = \|\delta(T_{a})\| = \|[E,T_{a}]\|.$
   From Lemma \ref{ilem}, we have that
    $$\|\delta(T_{a})(x,y)\| = \|(0, \omega(x)a - \omega(xa))\| \leq \|a-\omega(a)\|_2 \|x\| \leq \|a-\omega(a)\|_2 \|(x,y)\|.$$
   With the choice of $({\bf 1}_\mathcal{A},0)$, we get 
             $$\|\delta(T_{a})\| = \|a-\omega(a)\|_2.$$   
  Since $\omega$ is tracial, $\|xa\|_2 \leq \|a\|\|x\|_2.$ Hence it clearly follows that $\|T_{a}\| = \|a\|.$ Notice that $T_{ab} = T_{a} T_{b}.$  Now a direct application
  of the derivation rules gives that $\|\delta(T_{a})\| = L(a) = \sigma^\omega_2(a)$ is a strongly Leibniz seminorm.

\end{document}